\def\indi{{1\kern-.20em\rm I}}
\newtheorem{teo}{Theorem}[section]
\newtheorem{pro}[teo]{Proposition}
\newtheorem{ex}{Example}[section]
\newcommand{\PreserveBackslash}[1]{\let\temp=\\#1\let\\=\temp}
\newcolumntype{C}[1]{>{\PreserveBackslash\centering}p{#1}}
\newcolumntype{R}[1]{>{\PreserveBackslash\raggedleft}p{#1}}
\newcolumntype{L}[1]{>{\PreserveBackslash\raggedright}p{#1}}
\begin{document}

\title{pMAX Random Fields}

\author{Marta Ferreira} \affil{Centro de Matemática da Universidade do Minho; Centro de Matemática Computacional e Estocástica da Universidade de Lisboa; Centro de Estatística e Aplicações da Universidade de Lisboa, Portugal, \texttt{msferreira@math.uminho.pt}}

\author{Ana Paula Martins} \affil{Universidade da Beira Interior, Centro de Matem\'{a}tica e Aplica\c{c}\~oes (CMA-UBI), Avenida Marqu\^es d'Avila e Bolama, 6200-001 Covilh\~a, Portugal, \texttt{amartins@ubi.pt}}

\author{Helena Ferreira} \affil{Universidade da Beira Interior, Centro de Matem\'{a}tica e Aplica\c{c}\~oes (CMA-UBI), Avenida Marqu\^es d'Avila e Bolama, 6200-001 Covilh\~a, Portugal, \texttt{helenaf@ubi.pt}}
\date{}

\maketitle

\abstract{The risk of occurrence of atypical phenomena is a cross-cutting concern in several areas, such as engineering, climatology, finance, actuarial, among others. Extreme  value theory is the natural tool to approach this theme. Many of these random phenomena carry variables defined in time and space, usually modeled through random fields. Thus, the study of random fields in the context of extreme values becomes imperative and has been developed especially in the last decade. In this work, we propose a new random field, called pMAX, designed for modeling extremes.  We analyze its dependence and pre-asymptotic dependence structure through the corresponding bivariate tail dependence coefficients. Estimators for the model parameters are obtained and their finite sample properties analyzed. Examples with simulations illustrate the results.}

\graphicspath{{./Figuras/}}

\noindent\textbf{Keywords:} {extreme values,  tail dependence coefficients, asymptotic independence, pMax}\\

\noindent\textbf{AMS 2000 Subject Classification}: 60G70\\

\section{Introduction}

Modeling non-deterministic phenomena in a space-time context, such as precipitation values in a territory over a given period of time, can be done through random fields
$$
\{Y(x,t):\, x\in\mathbb{R}^2,\, t\in \mathbb{R}_+\},
$$
where $x$ can be interpreted as the location and $t$ the instant at which the the random amount $Y$ was recorded (see {\it{e.g.}}  Buishand {\it{et al.}}  \cite{Buish2008} 2008, Hristopulos \cite{Hris2020} 2020 and references therein).
	
Considering time discretization $t\in \mathbb{N}$,  we can write the previous stochastic process as
$$
\{Y(x,n):\,x\in\mathbb{R}^2,\,n\in\mathbb{N}\}\equiv\{Y_n(x):\,x\in\mathbb{R}^2,\,n\in\mathbb{N}\}=\bigcup_{n\geq 1}\{Y_n(x):\,x\in\mathbb{R}^2\}\,.
$$
Thus, modeling through a sequence of random fields
$$
\{Y_n(x):\,x\in\mathbb{R}^2\}_{n\geq 1}
$$
and studying their dependence and their finite marginal distributions is an  approach for the space-time analysis of random phenomena.

In this work, we intend to analyze the simultaneous occurrence of extreme values for the random variables (r.v.'s) $Y_n(x)$ and $Y_m(x')$, corresponding to any two locations $x$ and $x'$ and instants $n$ and $m$, for a sequence of random fields $\{Y_n(x):\,x\in\mathbb{R}^2\}_{n\geq 1}$, designated by pMAX fields. We say that a variable $Y_n(x)$ is pMAX, where $p$ is a reference to {\it power}, when it is of the form
\begin{eqnarray}\label{Ferreira_pmax}
Y_n(x)=X_n(x)\vee Z_n(x)^{1/\alpha(x)},\,x\in\mathbb{R}^2,
\end{eqnarray}
where $ X_n (x) $ and $ Z_n (x) $ are r.v.'s and $\alpha(x)>0$. In the literature there are models involving pMAX variables, as can be seen, for example, in Ferreira and Ferreira (\cite{Ferreira2014} 2014),  Heffernan \emph{et al.} (\cite{Heffernan2007} 2017) and references therein.

In the following section, we present the assumptions on the sequences of random fields  $\{X_n(x):\, x\in\mathbb{R}^2\}_{n\geq 1}$, $\{Z_n(x):\, x\in\mathbb{R}^2\}_{n\geq 1}$ and on the parameter function $\alpha:\mathbb{R}^2\to\mathbb{R}_+$, considered along the paper.
In Sections \ref{Ferreira_sdeptemp} and \ref{Ferreira_sdepespac} we study the temporal, spatial and spatio-temporal dependence, as well as the tendency for oscillations in the high values. Events related to extremes of $Y_n(x)$, $x\in\mathbb{R}^2$, $n\geq 1$, will concern  ``exceedances of a real level $y$'' defined by $\{Y_n(x)>y\}$. The joint occurrence of high values for $Y_n(x)$ and $Y_m(x')$ will be studied using bivariate tail dependence coefficients (Joe \cite{Joe1997} 1997). The tendency for high values oscillations will be evaluated through functions of these coefficients. Section \ref{Ferreira_sdepespac} addresses asymptotic independence in the sense of Ledford and Tawn (\cite{Tawn1996} 1996). This is a kind of weak or residual tail dependence vanishing at increasingly extreme quantiles and important for inferential purposes in order to avoid biased results. In section \ref{Ferreira_sexemp} we exemplify the results, strengthening the hypotheses for concrete models that we will simulate. Section 6 is devoted to the estimation of the model parameters $\alpha(x)>0,$ $x\in \mathbb{R}^2.$

\section{Presenting the model}\label{Ferreira_smodelo}

The pMAX random field that we propose is defined by (\ref{Ferreira_pmax}), and satisfies conditions (i), (ii) and (iii) described below:
\begin{itemize}
	\item[(i)] $\{X_n(x):\, x\in\mathbb{R}^2\}_{n\geq 1}=\{\mathbf{X}_n\}_{n\geq 1}$ is a stationary sequence of random fields with standard Fr\'echet marginals (i.e., $P(X_n(x)\leq z)=\exp(-1/z)$, $z>0$);
	\item[(ii)] $\{Z_n(x):\, x\in\mathbb{R}^2\}_{n\geq 1}=\{\mathbf{Z}_n\}_{n\geq 1}$ is a sequence of independent and identically distributed (i.i.d.)  random fields, with standard Fr\'echet marginals and independent of the previous sequence;
	\item[(iii)]  $\alpha(x)$, $x\in\mathbb{R}^2$, is a positive real valued function.
\end{itemize}\vspace{0.3cm}

For any locations $x,x'\in\mathbb{R}^2$, $ z,z'>0$ and $n,m\geq 1$, we have:
\begin{itemize}
	\item[]$P(Y_n(x)\leq z)=e^{-z^{-1}}e^{-z^{-\alpha(x)}}$,
	\item[]$P(Y_n(x)\leq z,Y_{n+m}(x')\leq z')=P(X_n(x)\leq z,X_{n+m}(x')\leq z')e^{-z^{-\alpha(x)}}e^{-z^{-\alpha(x')}},$
	\item[]$P(Y_n(x)\leq z,Y_{n}(x')\leq z')=P(X_n(x)\leq z,X_{n}(x')\leq z')P(Z_n(x)\leq z,Z_{n}(x')\leq z').$ 
\end{itemize}\vspace{0.3cm}

A sequence of pMAX random fields, $\{Y_n(x):\, x\in\mathbb{R}^2\}_{n\geq 1}=\{\mathbf{Y}_n\}_{n\geq 1}$, presents temporal and spatial dependence regulated by the function $\alpha$ and by the spatial and temporal dependence structure of $\{\mathbf{X}_n\}_{n\geq 1}$ and $\{\mathbf{Z}_n\}_{n\geq 1}$, as we shall see.

\section{Tail dependence}\label{Ferreira_sdeptemp}

We describe the spatial and temporal dependence of the pMAX model through the tail dependence coefficient  (Sibuya \cite{Sibuya1960} 1960, Joe \cite{Joe1997} 1997):
\begin{eqnarray}\label{Ferreira_lambda}
\lambda(Y_{n+r}(x')|Y_{n}(x))=\displaystyle\lim_{y\to\infty}P(Y_{n+r}(x')>y\ |\ Y_n(x)>y),
\end{eqnarray}
with $r\geq 0,$ $n\geq 1,$ $x,x'\in \mathbb{R}^2.$ \\

It will be said that $(Y_{n}(x),Y_{n+r}(x'))$ is upper tail dependent when $\lambda(Y_{n+r}(x')|Y_{n}(x))>0$ and upper tail independent if $\lambda(Y_{n+r}(x')|Y_{n}(x))=0$.

When $ r> 0 $ and $x=x',$  $\lambda(Y_{n+r}(x)|Y_{n}(x))$ summarizes the tail temporal dependence in location $x$. If  $ r= 0 $ and $x\not =x'$, then $\lambda(Y_{n}(x')|Y_{n}(x))$ measures the tail spatial dependence between locations $x$ and $x'$. When $ r> 0 $ and $x\not =x',$  $\lambda(Y_{n+r}(x')|Y_{n}(x))$ evaluates tail dependence in time and in space.

\begin{pro}\label{Ferreira_plambda}
In a pMAX random field $\{\mathbf{Y}_n\}_{n\geq 1}$, for $r>0$, we have
\begin{eqnarray}\nonumber
\lambda(Y_{n+r}(x')|Y_{n}(x))
=\left\{
\begin{array}{ll}
0 & ,\,\alpha(x)<1\\
\frac{1}{2}\lambda(X_{n+r}(x')|X_{n}(x))& ,\,\alpha(x)=1\\
\lambda(X_{n+r}(x')|X_{n}(x))& ,\,\alpha(x)>1\\
\end{array},
\right.
\end{eqnarray}
and, if $r=0$ and $x\not=x'$, then
\begin{eqnarray}\nonumber
\lambda(Y_{n}(x')|Y_{n}(x))
=\left\{
\begin{array}{ll}
\lambda(Z_n(x')^{1/\alpha(x')}|Z_n(x)^{1/\alpha(x)}) & ,\,\alpha(x)<1\\
\frac{1}{2}\left(\lambda(X_{n}(x')|X_{n}(x))+\lambda(Z_n(x')^{1/\alpha(x')}|Z_n(x)^{1/\alpha(x)})\right)& ,\,\alpha(x)=1\\
\lambda(X_{n}(x')|X_{n}(x))& ,\,\alpha(x)>1\\
\end{array}.
\right.
\end{eqnarray}
\end{pro}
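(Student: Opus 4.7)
The plan is to compute both pieces of
$$
\lambda(Y_{n+r}(x')|Y_{n}(x))=\lim_{y\to\infty}\frac{P(Y_{n+r}(x')>y,\,Y_n(x)>y)}{P(Y_n(x)>y)}
$$
to leading order in $y$ and take the ratio. For the denominator, the marginal formula from Section~\ref{Ferreira_smodelo} gives $P(Y_n(x)>y)=1-\exp(-y^{-1}-y^{-\alpha(x)})$, and the expansion $1-e^{-u}\sim u$ as $u\downarrow 0$ yields $P(Y_n(x)>y)\sim y^{-1}+y^{-\alpha(x)}$, which is asymptotic to $y^{-1}$ if $\alpha(x)>1$, to $y^{-\alpha(x)}$ if $\alpha(x)<1$, and to $2y^{-1}$ if $\alpha(x)=1$.

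For the numerator, I would use the max decomposition $\{Y_n(x)>y\}=A\cup B$ with $A=\{X_n(x)>y\}$ and $B=\{Z_n(x)^{1/\alpha(x)}>y\}$ (analogously $A'$, $B'$ for $Y_{n+r}(x')$), and apply inclusion--exclusion to $(A\cup B)\cap(A'\cup B')=E_1\cup E_2\cup E_3\cup E_4$, where $E_1=A\cap A'$, $E_2=A\cap B'$, $E_3=B\cap A'$, $E_4=B\cap B'$. The key ingredients are: (a) the marginal asymptotics $P(A)\sim y^{-1}$, $P(B)\sim y^{-\alpha(x)}$, $P(A')\sim y^{-1}$, $P(B')\sim y^{-\alpha(x')}$; (b) independence of $\{\mathbf{X}_n\}$ and $\{\mathbf{Z}_n\}$, which factorizes $P(E_2)=P(A)P(B')\sim y^{-1-\alpha(x')}$ and $P(E_3)=P(B)P(A')\sim y^{-\alpha(x)-1}$; (c) the definition of $\lambda_X\equiv\lambda(X_{n+r}(x')|X_n(x))$ gives $P(E_1)\sim\lambda_X\,y^{-1}$; (d) for $E_4$, if $r>0$ the i.i.d.\ assumption on $\{\mathbf{Z}_n\}$ factorizes $P(E_4)=P(B)P(B')\sim y^{-\alpha(x)-\alpha(x')}$, while if $r=0$ and $x\neq x'$, the definition of $\lambda_Z\equiv\lambda(Z_n(x')^{1/\alpha(x')}|Z_n(x)^{1/\alpha(x)})$ gives $P(E_4)\sim\lambda_Z\,y^{-\alpha(x)}$ (using the normalization of $P(B)$, which is the smaller tail when $\alpha(x)\le\alpha(x')$).

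I would then show that every inclusion--exclusion cross term $P(E_i\cap E_j)$ is of lower order than $y^{-\min(1,\alpha(x))}$: each such intersection imposes at least one extra constraint of the form $\{X>y\}$ or $\{Z^{1/\alpha}>y\}$ on an independent factor, hence carries an additional factor of order $y^{-1}$ or $y^{-\alpha(\cdot)}$ relative to the dominant $P(E_i)$. The conclusion then follows by matching the dominant single-set contributions against the denominator, case by case: when $\alpha(x)>1$ only $E_1$ reaches the $y^{-1}$ scale, giving $\lambda_X$; when $\alpha(x)=1$, $E_1$ (and, if $r=0$, also $E_4$) contribute at the $y^{-1}$ scale while the denominator has the extra factor $2$, producing the $\tfrac{1}{2}$'s; when $\alpha(x)<1$, the $y^{-\alpha(x)}$ scale is reached only by $E_4$ with $r=0$ (giving $\lambda_Z$) and by no term when $r>0$ (giving $0$).

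The main technical obstacle will be the boundary case $\alpha(x)=1$, where several of the four sets have the same $y^{-1}$ order; there one has to verify carefully that all cross terms in the inclusion--exclusion are strictly of lower order, a check that hinges on the independence of $\{\mathbf{X}_n\}$ and $\{\mathbf{Z}_n\}$ and, for $r>0$, on the i.i.d.\ property of $\{\mathbf{Z}_n\}$. The other delicate point is the $r=0$ term $P(E_4)$, which depends on the spatial law of $\mathbf{Z}_n$; expressing it in the conditional form of the $\lambda$-coefficient is what matches the $\lambda_Z$ appearing in the statement.
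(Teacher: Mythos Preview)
Your approach is correct but takes a different route from the paper. The paper starts from the complementary form
\[
P(Y_{n+r}(x')>y\mid Y_n(x)>y)=1-\frac{P(Y_{n+r}(x')\le y)-P(Y_n(x)\le y,\,Y_{n+r}(x')\le y)}{P(Y_n(x)>y)},
\]
substitutes the product formula $P(Y_n(x)\le y,Y_{n+r}(x')\le y)=P(X_n(x)\le y,X_{n+r}(x')\le y)\,P(Z_n(x)\le y^{\alpha(x)},Z_{n+r}(x')\le y^{\alpha(x')})$, and then algebraically splits the resulting fraction into two pieces whose limits involve $P(X_{n+r}(x')\le y\mid X_n(x)>y)$ and a $Z$-quotient, handled separately for $r>0$ and $r=0$. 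Your inclusion--exclusion on $\{Y>y\}=\{X>y\}\cup\{Z^{1/\alpha}>y\}$ is more combinatorial and arguably more transparent: it isolates the four ``source'' contributions $E_1,\ldots,E_4$ at the outset and makes the dominant-balance reasoning explicit, at the price of checking that all cross terms $P(E_i\cap E_j)$ are of strictly lower order (which they are, since each such intersection factors through the $X$--$Z$ independence and picks up an extra power of $y$). The paper's distribution-function route avoids that bookkeeping but is algebraically heavier. Both arguments rest on exactly the same independence hypotheses, and in the boundary case $\alpha(x)=1$ both require the same care that no secondary contribution enters at the $y^{-1}$ scale.

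Two minor points. Your parenthetical ``which is the smaller tail when $\alpha(x)\le\alpha(x')$'' is unnecessary: writing $P(E_4)=P(B)\,P(B'\mid B)$ and taking $P(B'\mid B)\to\lambda_Z$ works for any ordering of $\alpha(x),\alpha(x')$. And when $\lambda_X=0$ or $\lambda_Z=0$ the ``$\sim$'' notation degenerates; phrasing it as $P(E_1)/y^{-1}\to\lambda_X$ (and similarly for $E_4$) is cleaner and is in fact all you need.
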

\begin{proof}
Observe that
\begin{align*}\nonumber
\lefteqn{P(Y_{n+r}(x')>y\ |\ Y_n(x)>y)=\frac{1-P(Y_{n+r}(x')\leq y)-P(Y_{n}(x)\leq y)+P(Y_n(x)\leq y,Y_{n+r}(x')\leq y)}{1-P(Y_n(x)\leq y)}}\\[0.3cm]
&=  1-\frac{e^{-y^{-1}-y^{-\alpha(x')}}-P(X_n(x)\leq y,X_{n+r}(x')\leq y)P(Z_n(x)\leq y^{\alpha(x)},Z_{n+r}(x')\leq y^{\alpha(x')})}{1-e^{-y^{-1}-y^{-\alpha(x)}}}\\[0.3cm]
&= 1-\frac{e^{-y^{-1}-y^{-\alpha(x')}}-\left[P(X_{n+r}(x')\leq y)-P(X_n(x)> y,X_{n+r}(x')\leq y)\right]P(Z_n(x)\leq y^{\alpha(x)},Z_{n+r}(x')\leq y^{\alpha(x')})}{1-e^{-y^{-1}-y^{-\alpha(x)}}}\\[0.3cm]
&=  1-e^{-y^{-1}-y^{-\alpha(x')}}\frac{1-\frac{P(Z_n(x)\leq y^{\alpha(x)},Z_{n+r}(x')\leq y^{\alpha(x')})}{e^{-y^{-\alpha(x')}}}}{1-e^{-y^{-1}-y^{-\alpha(x)}}}\\
&\ \ \ -P(X_{n+r}(x')\leq y\ |\ X_{n}(x)> y)\frac{1-e^{-y^{-1}}}{1-e^{-y^{-1}-y^{-\alpha(x)}}}P(Z_n(x)\leq y^{\alpha(x)},Z_{n+r}(x')\leq y^{\alpha(x')}),
\end{align*}

\noindent where we have
\begin{align*}\nonumber
\lefteqn{\hspace{-4cm}\lim_{y\to\infty}P(X_{n+r}(x')\leq y\ |\ X_{n}(x)> y)\frac{1-e^{-y^{-1}}}{1-e^{-y^{-1}-y^{-\alpha(x)}}}P(Z_n(x)\leq y^{\alpha(x)},Z_{n+r}(x')\leq y^{\alpha(x')})}\\[0.3cm]
&\hspace{-3cm}= \left\{
\begin{array}{ll}
0&,\,\alpha(x)<1\\
\frac{1}{2}(1-\lambda(X_{n+r}(x')|X_{n}(x)))&,\,\alpha(x)=1\\
1-\lambda(X_{n+r}(x')|X_{n}(x))&,\,\alpha(x)>1
\end{array}.
\right.
\end{align*}	

If $r>0$ then $Z_n(x)$ and $Z_{n+r}(x')$ are independent and
\begin{equation}\nonumber
\lim_{y\to\infty}\frac{1-\frac{P(Z_n(x)\leq y^{\alpha(x)},Z_{n+r}(x')\leq y^{\alpha(x')})}{e^{-y^{-\alpha(x')}}}}{1-e^{-y^{-1}-y^{-\alpha(x)}}}= \displaystyle\lim_{y\to\infty}\frac{1-e^{-y^{-\alpha(x)}}}{1-e^{-y^{-1}-y^{-\alpha(x)}}}=\left\{
\begin{array}{ll}
1&,\,\alpha(x)<1\\
\frac{1}{2}&,\, \alpha(x)=1\\
0&,\, \alpha(x)>1\\
\end{array}.
\right.
\end{equation}	

If $r=0$ and $x\not=x',$ then
\begin{align*}\nonumber
\lefteqn{\hspace{-1cm}\lim_{y\to\infty}e^{-y^{-1}-y^{-\alpha(x')}}\frac{1-\frac{P(Z_n(x)\leq y^{\alpha(x)},Z_{n+r}(x')\leq y^{\alpha(x')})}{e^{-y^{-\alpha(x')}}}}{1-e^{-y^{-1}-y^{-\alpha(x)}}}}\\[0.3cm]
&=\lim_{y\to\infty}e^{-y^{-1}-y^{-\alpha(x')}}\frac{1-\frac{P(Z_{n+r}(x')\leq y^{\alpha(x')})-P(Z_n(x)> y^{\alpha(x)},Z_{n+r}(x')\leq y^{\alpha(x')})}{e^{-y^{-\alpha(x')}}}}{1-e^{-y^{-1}-y^{-\alpha(x)}}}\\[0.3cm]
&=  \frac{P(Z_{n+r}(x')\leq y^{\alpha(x')}|Z_n(x)> y^{\alpha(x)})(1-e^{-y^{-\alpha(x)}})}{1-e^{-y^{-1}-y^{-\alpha(x)}}}e^{-y^{-1}}\\[0.3cm]
&=\left\{
\begin{array}{ll}
1-\lambda(Z_n(x')^{1/\alpha(x')}|Z_n(x)^{1/\alpha(x)})&,\,\alpha(x)<1\\
\frac{1}{2}(1-\lambda(Z_n(x')^{1/\alpha(x')}|Z_n(x)^{1/\alpha(x)}))&,\, \alpha(x)=1\\
0&,\, \alpha(x)>1
\end{array}.
\right.
\end{align*}	
\end{proof}

The previous result shows that the pMAX random field $\{\mathbf{Y}_n\}_{n\geq 1}$ is temporal or spatial-temporal upper tail independent when $\alpha(x)<1,$ whereas upper tail dependence is regulated by the upper tail dependence of $\{\mathbf{X}_n\}_{n\geq 1}.$ On the other hand, spatial upper tail dependence is regulated by the spatial dependence structure of $\{\mathbf{X}_n\}_{n\geq 1}$ and $\{\mathbf{Z}_n\}_{n\geq 1}.$

The case $r>0$ above was already stated in Ferreira and Ferreira (\cite{Ferreira2014} 2014; Proposition 2.5).

\section{Pre-asymptotic dependence}\label{Ferreira_sdepespac}

Previously we have seen that tail dependence can be assessed through coefficient $\lambda(Y_{n+r}(x')|Y_{n}(x))$, where a unit value means total dependence and a null value corresponds to tail independence. In the latter case, there may be a residual or pre-asymptotic dependence captured by the velocity of convergence of the limit in (\ref{Ferreira_lambda}) towards zero. More precisely, if
\begin{eqnarray}\label{Ferreira_etax}
P(X_{n+r}(x')>y\ |\ X_n(x)>y)\sim y^{-1/\eta_{(x'|x)}^{(X,r)}-1}\mathcal{L}_{(x'|x)}^{(X,r)}(y)
\end{eqnarray}
as $y\to\infty$, where function $\mathcal{L}_{(x'|x)}^{(X,r)}(y)$ is slow varying ($\mathcal{L}(y)$ is a slow varying function if $\mathcal{L}:\mathbb{R}^+\to\mathbb{R}$ and $\mathcal{L}(ty)/\mathcal{L}(y)\to 1$, as $y\to\infty$, $t>0$), we say that $\eta_{(x'|x)}^{(X,r)}\in(0,1]$ is a residual tail dependence coefficient usually denoted in literature as Ledford and Tawn tail dependence coefficient (see, e.g., Ledford and Tawn \cite{Tawn1996} 1996, Heffernan \emph{et al.} \cite{Heffernan2007} 2007, Ferreira and Ferreira \cite{Ferreira2014} 2014 and references therein). Notation $a(y)\sim b(y)$ in (\ref{Ferreira_etax}) stands for $a(y)/b(y)\to 1$, as $y\to\infty$. Observe that $\eta_{(x'|x)}^{(X,r)}$ is a space-time measure if $r>0$ and $x\not =x'$, a temporal measure if $r>0$ and $x=x'$ and a spatial measure if $r=0$ and $x\not =x'$ (we denote $\eta_{(x'|x)}^{(X)}\equiv\eta_{(x'|x)}^{(X,0)}$). Observe that tail dependence ($\lambda>0$) corresponds to $\eta=1$ and $\eta <1$ means asymptotic tail independence. Under exact independence we have $\eta=1/2$. A classical example is assigned to Gaussian random pairs with correlation coefficient $\rho$, where $\eta=(1+\rho)/2$.\\

\begin{pro}
In a pMAX random field satisfying (\ref{Ferreira_etax}) with $r>0$, we have
$$
\eta_{(x'|x)}^{(Y,r)}=\left\{
\begin{array}{ll}
\alpha(x)\max\left(
\eta_{(x'|x)}^{(X,r)},\frac{1}{1+\min(\alpha(x'),1)}\right) & ,\,\alpha(x)<1\\\\
\max\left(\eta_{(x'|x)}^{(X)},\frac{1}{1+\alpha(x)},\frac{1}{1+\alpha(x')}\right) & ,\,\alpha(x)\geq 1
\end{array}.
\right.
$$
If $r=0$ and, in addition,
\begin{eqnarray}\label{Ferreira_etaz}
P(Z_{n}(x')^{1/\alpha(x')}>y\ |\ Z_n(x)^{1/\alpha(x)}>y)\sim y^{-1/\eta_{(\alpha(x')|\alpha(x))}^{(Z)}-1}\mathcal{L}_{(\alpha(x'),\alpha(x))}^{(Z)}(y)
\end{eqnarray}
holds, as $y\to\infty$, where $\mathcal{L}_{(\alpha(x'),\alpha(x))}^{(Z)}(y)$ is a slowly varying function and $\eta_{(\alpha(x')|\alpha(x))}^{(Z)}\in(0,1]$, then
$$
\eta_{(x'|x)}^{(Y)}=
\min\left(1,\alpha(x)\right)\max\left(
\eta_{(x'|x)}^{(X)},\eta_{(\alpha(x')|\alpha(x))}^{(Z)},\frac{1}{1+\alpha(x)},
\frac{1}{1+\alpha(x')}\right).
$$
\end{pro}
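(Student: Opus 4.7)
The plan is to extend the derivation of Proposition \ref{Ferreira_plambda} by tracking the polynomial rate of decay of the conditional probability, rather than just its limit. I would start from the same Bonferroni identity
\[
P(Y_n(x) > y, Y_{n+r}(x') > y) = 1 - P(Y_n(x)\le y) - P(Y_{n+r}(x')\le y) + P(Y_n(x)\le y, Y_{n+r}(x')\le y),
\]
insert the pMAX representation through (i)--(iii), and split the joint CDF. For $r > 0$, the independence from (ii) factors the last term as $P(X_n(x)\le y, X_{n+r}(x')\le y)\, e^{-y^{-\alpha(x)}}\, e^{-y^{-\alpha(x')}}$; for $r = 0$, the two $Z$-exponentials are replaced by the joint $P(Z_n(x)\le y^{\alpha(x)}, Z_n(x')\le y^{\alpha(x')})$, which I would re-expand through complementary probabilities so that the joint $Z$-survival appears explicitly and can be controlled by the Ledford--Tawn hypothesis \eqref{Ferreira_etaz}.

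The key computation is a careful Taylor expansion of every $e^{-y^{-s}}$ for $s\in\{1,\alpha(x),\alpha(x')\}$ and the identity $P(X_n(x)\le y, X_{n+r}(x')\le y) = 2e^{-1/y} - 1 + P(X_n(x) > y, X_{n+r}(x') > y)$. The linear contributions of the three blocks cancel exactly, leaving as dominant candidates the regularly varying quantities
\[
y^{-1-\alpha(x)},\qquad y^{-1-\alpha(x')},\qquad y^{-\alpha(x)-\alpha(x')},\qquad P(X_n(x) > y, X_{n+r}(x') > y),
\]
and, when $r = 0$, additionally $P(Z_n(x) > y^{\alpha(x)}, Z_n(x') > y^{\alpha(x')})$. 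Inserting \eqref{Ferreira_etax} (and \eqref{Ferreira_etaz} when $r = 0$) turns the remaining survivals into regularly varying functions of explicit indices, so that $P(Y_n(x) > y, Y_{n+r}(x') > y)$ is itself regularly varying with index equal to the smallest of the competing exponents. Dividing by the marginal $P(Y_n(x) > y) \sim y^{-\min(1,\alpha(x))}$ from Section \ref{Ferreira_smodelo} and matching the resulting rate against the form in \eqref{Ferreira_etax} applied to $Y$ then identifies $\eta^{(Y,r)}_{(x'|x)}$.

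The two regimes of the statement follow from whether the marginal factor is $y^{-1}$ or $y^{-\alpha(x)}$: for $\alpha(x) \ge 1$ the candidate $y^{-\alpha(x)-\alpha(x')}$ is dominated by one of the two $y^{-1-\alpha(\cdot)}$ terms (since $\alpha(x) \ge 1$ forces $1+\alpha(x') \le \alpha(x)+\alpha(x')$) and drops out of the $\min$, producing the first formula; for $\alpha(x) < 1$ the marginal scale $\alpha(x)$ factors cleanly out of every candidate, and a short comparison (distinguishing $\alpha(x')<1$ from $\alpha(x')\ge 1$) collapses the remaining $\min$ into $1+\min(\alpha(x'),1)$, yielding the second formula. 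The $r = 0$ statement follows the same scheme with $\eta^{(Z)}_{(\alpha(x')|\alpha(x))}$ entering as a new candidate in the max. I expect the main obstacle to be the bookkeeping of the Taylor cancellations together with the slowly varying factors -- in particular verifying that cross products between the $X$-survival and the remainders $y^{-\alpha(\cdot)}$ stay strictly below the leading order, and, in the $r = 0$ case, that expanding the joint $Z$-CDF does not double-count the $y^{-\alpha(x)-\alpha(x')}$ piece that also arises from the purely marginal expansion.
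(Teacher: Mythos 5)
Your route is essentially the paper's: both arguments reduce $P(Y_n(x)>y,\,Y_{n+r}(x')>y)$ to a sum of competing regularly varying pieces, namely $P(X_n(x)>y,X_{n+r}(x')>y)$, the joint $Z$-survival (the product $y^{-\alpha(x)-\alpha(x')}$ when $r>0$, the term governed by (\ref{Ferreira_etaz}) when $r=0$), and the two cross terms $y^{-1-\alpha(x)}$ and $y^{-1-\alpha(x')}$, and then read off $\eta^{(Y,r)}_{(x'|x)}$ by comparing the dominant exponent with the marginal survival $P(Y_n(x)>y)\sim y^{-\min(1,\alpha(x))}$. The paper expands the event $\{Y_n(x)>y\}\cap\{Y_{n+r}(x')>y\}$ directly instead of Taylor-expanding the complementary-CDF identity, but the candidate terms and their orders coincide with yours, and your cancellation bookkeeping is correct.

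The gap is in the final, decisive step, which you only assert. First, ``matching against (\ref{Ferreira_etax}) applied to $Y$'' is not the normalization the proof needs: the margins of $Y_n(x)$ are not unit Fr\'echet, and the coefficient in the statement is defined, as in the paper's proof, through $P(Y_{n+r}(x')>y,Y_n(x)>y)\sim P(Y_n(x)>y)^{1/\eta^{(Y,r)}_{(x'|x)}}\mathcal{L}^{*}(y)$, i.e.\ the joint survival is compared with powers of the \emph{marginal survival}, not with powers of $y$; only this choice produces the prefactor $\min(1,\alpha(x))$ that you also need for the $r=0$ formula. Second, with this normalization the comparison you wave through (``collapses \dots into $1+\min(\alpha(x'),1)$'') does not follow from your own candidate list: for $\alpha(x)<1$ one has $\min\bigl(\alpha(x)+\alpha(x'),\,1+\alpha(x),\,1+\alpha(x')\bigr)=\alpha(x)+\min(\alpha(x'),1)$, so the computation yields $\alpha(x)\max\bigl(\eta^{(X,r)}_{(x'|x)},\tfrac{1}{\alpha(x)+\min(\alpha(x'),1)}\bigr)$ rather than $\alpha(x)\max\bigl(\eta^{(X,r)}_{(x'|x)},\tfrac{1}{1+\min(\alpha(x'),1)}\bigr)$, and the two differ whenever $\alpha(x)<1$ strictly. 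Concrete check: take $x=x'$, $\alpha(x)=1/2$ and the $X$-field independent at lag $r$; then $Y_n(x)$ and $Y_{n+r}(x)$ are exactly independent with equal margins, so $\eta^{(Y,r)}=1/2$ (this is also what Example 5.1 records, $\eta^{(Y,r)}_{(x)}=\max(1/2,\alpha(x))$ at $r=1$ and $1/2$ at $r>1$), whereas your claimed collapse gives $\tfrac12\max\bigl(\tfrac12,\tfrac{1}{1+1/2}\bigr)=\tfrac13$. So either carry out the comparison honestly and land on the $\alpha(x)+\min(\alpha(x'),1)$ form --- which is what the paper's own final ratio display and Example 5.1 support, the printed $\alpha(x)<1$ branch apparently carrying a typo --- or explain how the cross term $y^{-1-\alpha(x)}$, which is dominant among the non-$X$ candidates when $\alpha(x)<1\le\alpha(x')$, could be discarded; it cannot.
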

\begin{proof}
Observe that
\begin{eqnarray*}\nonumber
\lefteqn{P(Y_{n+r}(x')>y,Y_n(x)>y)}\\[0.3cm]
&=&P(X_{n+r}(x')>y,X_n(x)>y)\left(1-P(Z_{n+r}(x')>y^{\alpha(x')})-P(Z_n(x)>y^{\alpha(x)})
\right)\\[0.3cm]
&&+\ P(Z_{n+r}(x')>y^{\alpha(x')},Z_n(x)>y^{\alpha(x)})\left(1-P(X_{n+r}(x')>y)-P(X_n(x)>y)\right)\\[0.3cm]
&&+\ P(X_{n+r}(x')>y,X_n(x)>y)P(Z_{n+r}(x')>y^{\alpha(x')},Z_n(x)>y^{\alpha(x)})\\[0.3cm]
&&+\ P(X_n(x)>y)P(Z_{n+r}(x')>y^{\alpha(x')})+P(X_{n+r}(x')>y)P(Z_n(x)>y^{\alpha(x)})
\end{eqnarray*}
and thus, as $y\to\infty$,
\begin{eqnarray*}
\lefteqn{P(Y_{n+r}(x')>y,Y_n(x)>y)}\\[0.3cm]
&\sim&P(X_{n+r}(x')>y,X_n(x)>y)+P(Z_{n+r}(x')>y^{\alpha(x')},Z_n(x)>y^{\alpha(x)})
+y^{-1}\left(y^{-\alpha(x')}+y^{-\alpha(x)}\right).
\end{eqnarray*}
If $r>0$, under condition (\ref{Ferreira_etax}), we have
$$P(Y_{n+r}(x')>y,Y_n(x)>y)
\sim y^{-1/\eta_{(x'|x)}^{(X)}}\mathcal{L}_{(x'|x)}^{(X)}(y)+y^{-\alpha(x')-\alpha(x)}
+y^{-1}\left(y^{-\alpha(x')}+y^{-\alpha(x)}\right).
$$
and therefore
$$
\frac{P(Y_{n+r}(x')>y,Y_n(x)>y)}{P(Y_n(x)>y)^{1/\eta_{(x'|x)}^{(Y)}}}
\sim\displaystyle\frac{y^{-1/\eta_{(x'|x)}^{(X)}}\mathcal{L}_{(x'|x)}^{(X)}(y)+
y^{-\alpha(x')-\alpha(x)} +y^{-1-\alpha(x')}+y^{-1-\alpha(x)}}{\left(y^{-1}+y^{-\alpha(x)}\right)^{1/\eta_{(x'|x)}^{(Y)}}}.
$$
If $r=0$, just observe that, by (\ref{Ferreira_etax}) and (\ref{Ferreira_etaz}), we have
\begin{eqnarray*}
\lefteqn{P(Y_n(x')>y,Y_n(x)>y)}\\[0.3cm]
&\sim& y^{-1/\eta_{(x'|x)}^{(X)}}\mathcal{L}_{(x'|x)}^{(X)}(y)+y^{-1/\eta_{(\alpha(x')
|\alpha(x))}^{(Z)}}\mathcal{L}_{(\alpha(x')|\alpha(x))}^{(Z)}(y)+y^{-1}\left(y^{-\alpha(x')}
+y^{-\alpha(x)}\right).
\end{eqnarray*}
\end{proof}

The case $r>0$ in the previous result was already derived in Ferreira and Ferreira (\cite{Ferreira2014} 2014; Proposition 2.6)

\section{Examples}\label{Ferreira_sexemp}

For the pMAX model defined in (\ref{Ferreira_pmax}) we shall now consider particular cases of $\{X_n(x), x\in \mathbb{R}^2\}_{n\geq 1}$ and $\{Z_n(x), x\in \mathbb{R}^2\}_{n\geq 1}$ to illustrate the several types of dependence captured by the previously presented coefficients. $\{X_n(x), x\in \mathbb{R}^2\}_{n\geq 1}$ will be taken as  a sequence of moving maxima random fields, first of a sequence of i.i.d. random fields with unit Fréchet margins and second  of a sequence of i.i.d. random fields with unit Fréchet margins exhibiting spatial dependence. In the latter case the dependence between the margins are given by the  Schlather model (Schlather \cite{sha2002} 2002).

\begin{ex}
Let $\{\widehat{X}_n(x),\,x\in\mathbb{R}^2\}_{n\geq 1}$ be an i.i.d. sequence of random fields of unit Fréchet independent margins. Consider
\begin{equation*}
X_n(x)=\frac{2}{3}\hat{X}_n(x)\vee \frac{1}{3}\hat{X}_{n-1}(x),\,n\geq 1,
\end{equation*}
and  $\{Z_n(x)=Z_n,\,x\in\mathbb{R}^2\}_{n\geq 1}$. Thus we have a stationary sequence of random fields $\{X_n(x),\,x\in\mathbb{R}^2\}_{n\geq 1}$ with independent margins and $\{Z_n\}_{n\geq 1}$ i.i.d.. The marginal distributions $F_{X_n}(x)$ remain standard Fr\'echet.

In each location $x\in \mathbb{R}^2,$ the temporal dependence of $\{Y_n(x)\}_{n\geq 1}$ will be induced by the $1$-dependence of $\{X_n(x)\}_{n\geq 1},$ whereas the spatial dependence will be induced by common $Z_n$ to every locations, regulated by $\alpha(x)$. More precisely, we have temporal dependence given by
\begin{equation}\label{lamb}
\lambda(Y_{n+r}(x)|Y_n(x))=\left\{
\begin{array}{ll}
0 & ,\, \alpha(x)<1\vee r>1\\
1/6 & ,\, \alpha(x)=1\wedge r=1\\
1/3 & ,\, \alpha(x)> 1\wedge r=1,
\end{array}
\right.
\end{equation}
space-time dependence
$$
\lambda(Y_{n+r}(x')|Y_n(x))=0,\, r\geq 1,\, n\geq 1,
$$
for $x\not=x'$, and spatial dependence
$$
\lambda(Y_{n}(x')|Y_n(x)) =\left\{
\begin{array}{ll}
1 & ,\, \alpha(x')\leq \alpha(x)<1\\
1/2 & ,\, \alpha(x')\leq \alpha(x)=1\\
0 & ,\, \textrm{otherwise.}
\end{array}
\right.
$$\vspace{0.3cm}

Since the tail dependence coefficient is invariant to increasing transformations one can rather consider $\lambda(F(Y_{n+r}(x'))|F(Y_n(x))),$ $n\geq 1,$ $r\geq 0,$ $x,x'\in \mathbb{R}^2,$  where $F$ is the standard Fréchet distribution function. Observations of pairs $(F(Y_n(x)),F(Y_{n+r}(x'))),$ $n\geq 1,$ $r\geq 0,$ $x, x'\in \mathbb{R}^2,$ are plotted in Figures \ref{fig1}, \ref{fig2} \ref{fig3} to highlight, respectively, the spatial ($r=0$ and $x\neq x'$), temporal ($r>0$ and $x=x'$) and spatial-temporal ($r>0$ and $x\neq x'$) dependence.

\begin{figure}[!ht]
 \begin{center}
 \includegraphics[scale=0.35]{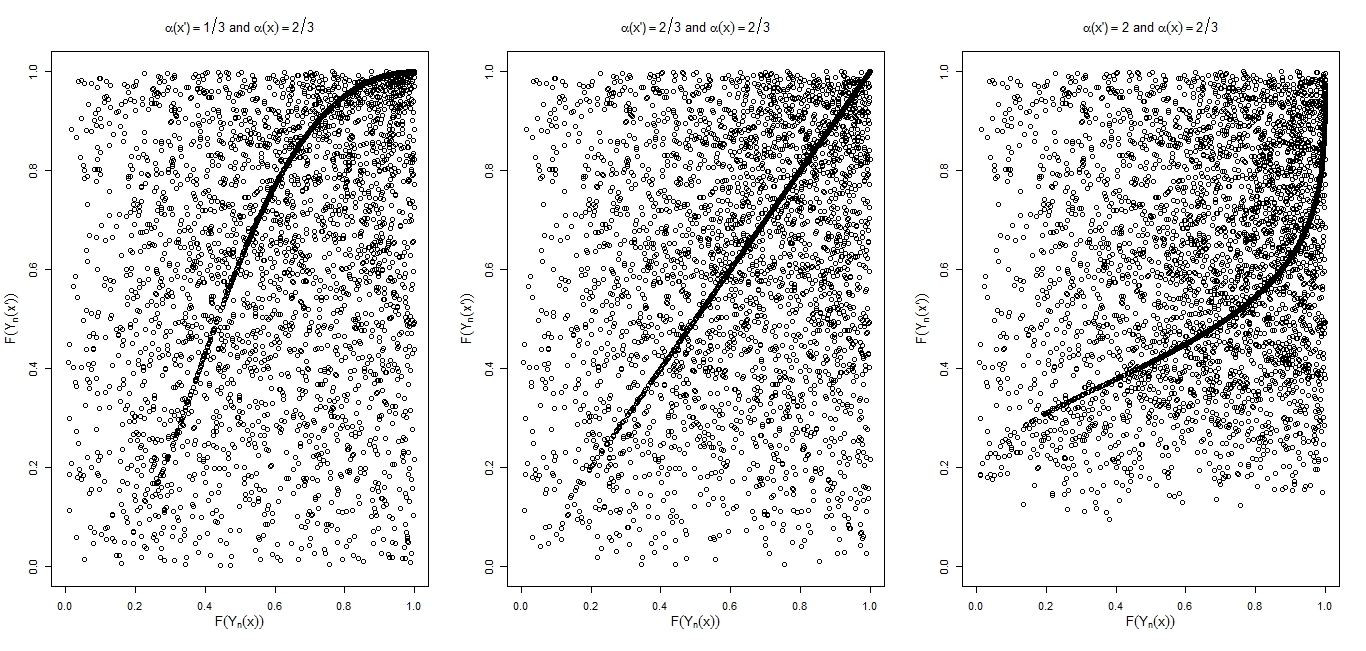}
 \includegraphics[scale=0.35]{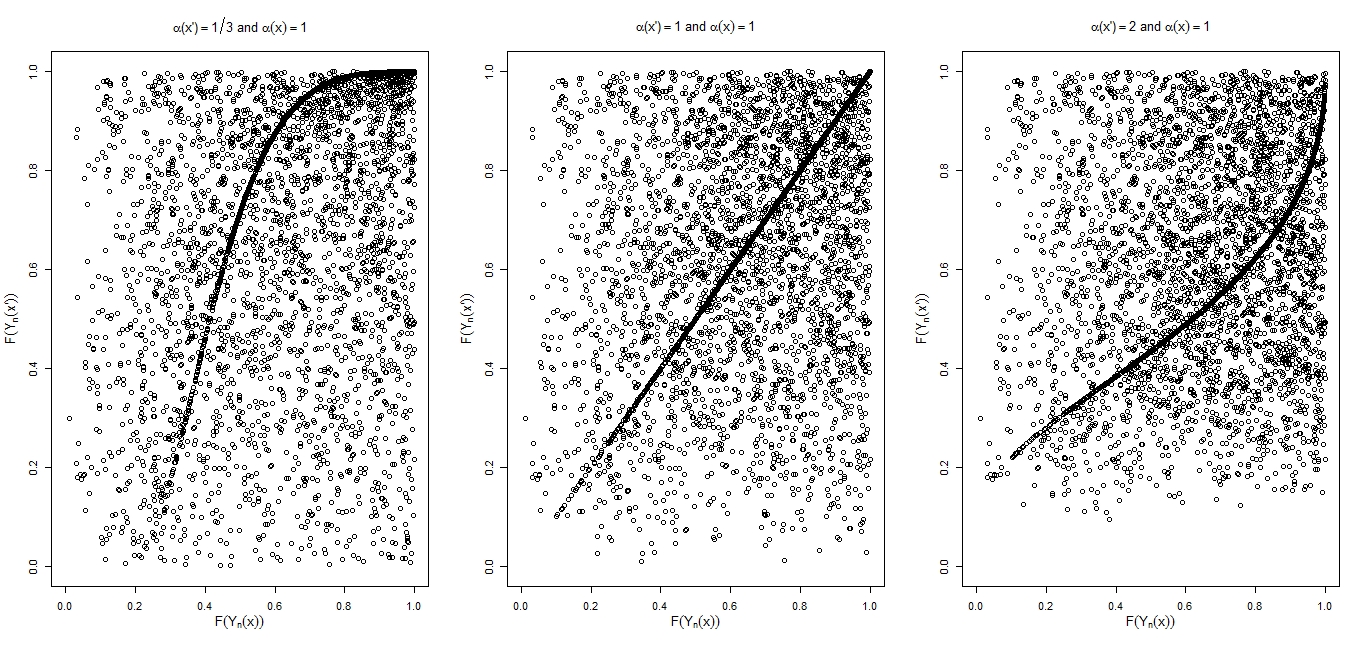}
 \includegraphics[scale=0.35]{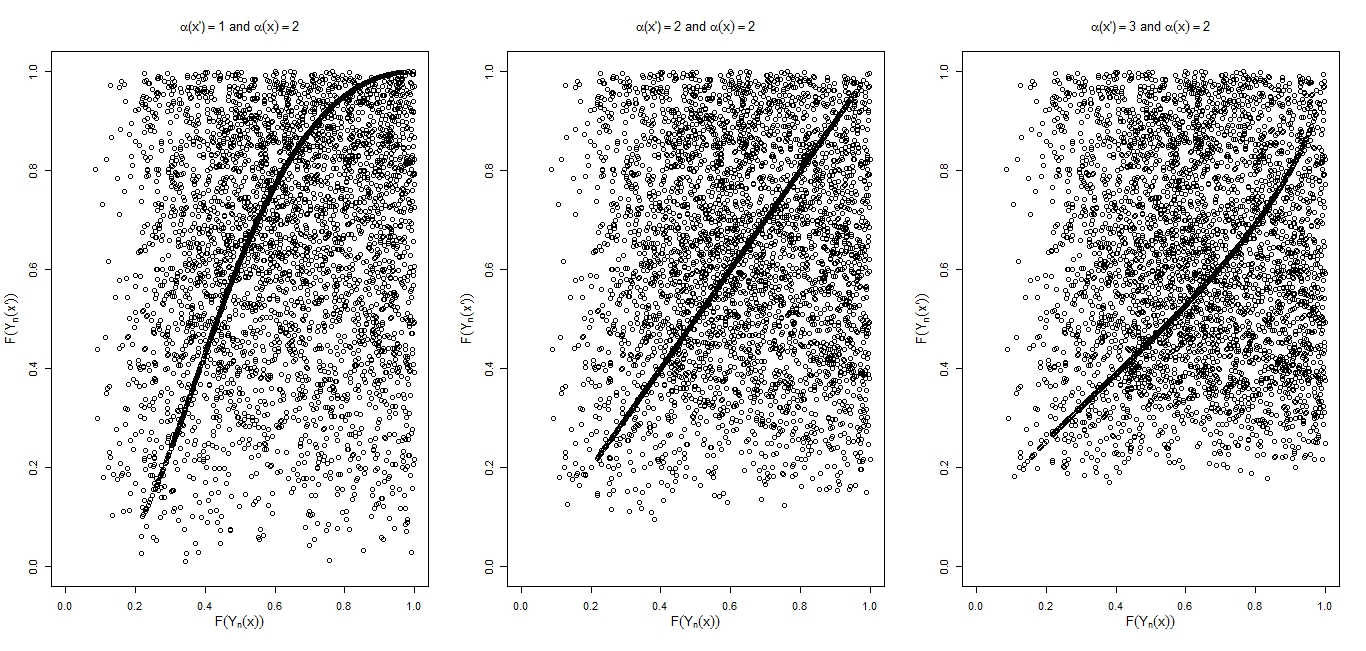}
 \caption{Observations of $(F(Y_n(x)), F(Y_n(x')))$ $n\geq 1,$  $x, x'\in \mathbb{R}^2,$ $x\neq x'$ for different values of $\alpha(x')$ and $\alpha(x)$}
  \end{center}\label{fig1}
 \end{figure}

 \begin{figure}[!ht]
 \begin{center}
 \includegraphics[scale=0.35]{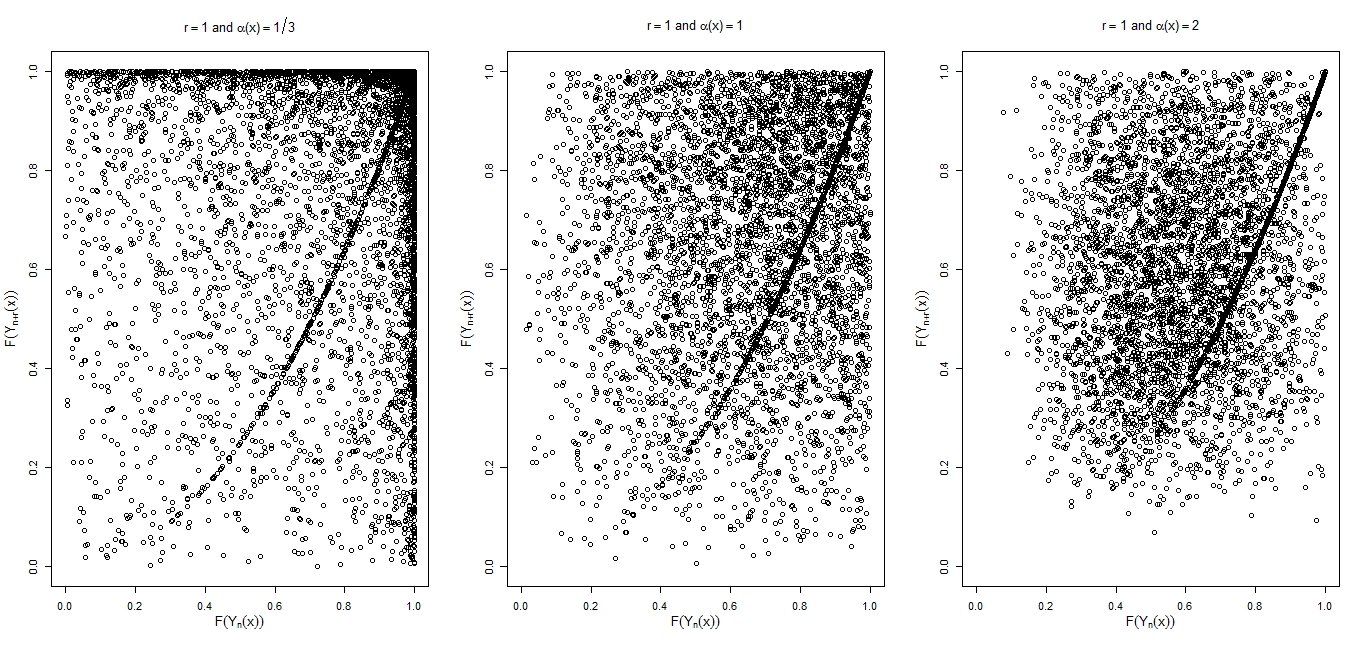}
 \includegraphics[scale=0.35]{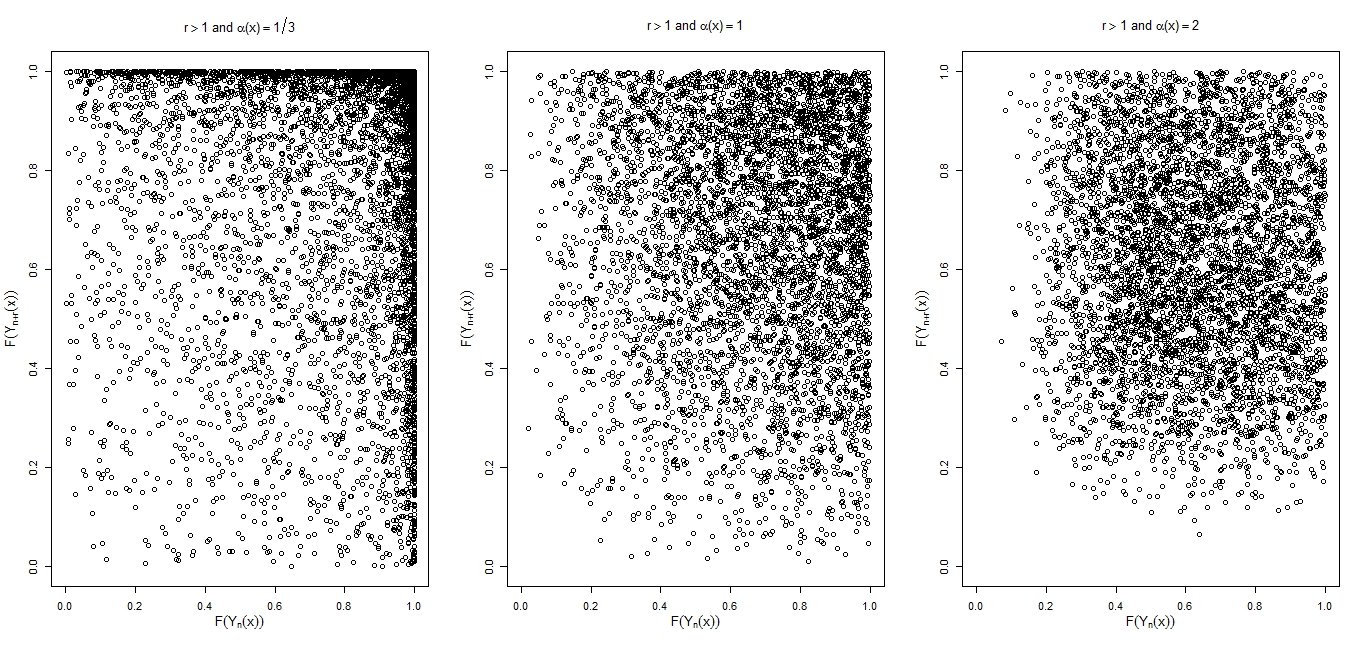}
 \caption{Observations of $(F(Y_n(x)), F(Y_{n+r}(x)))$ $n\geq 1,$ $r\geq 1,$  $x\in \mathbb{R}^2,$ for different values  $\alpha(x)$ }
  \end{center}\label{fig2}
 \end{figure}

 \begin{figure}[!ht]
 \begin{center}
 \includegraphics[scale=0.35]{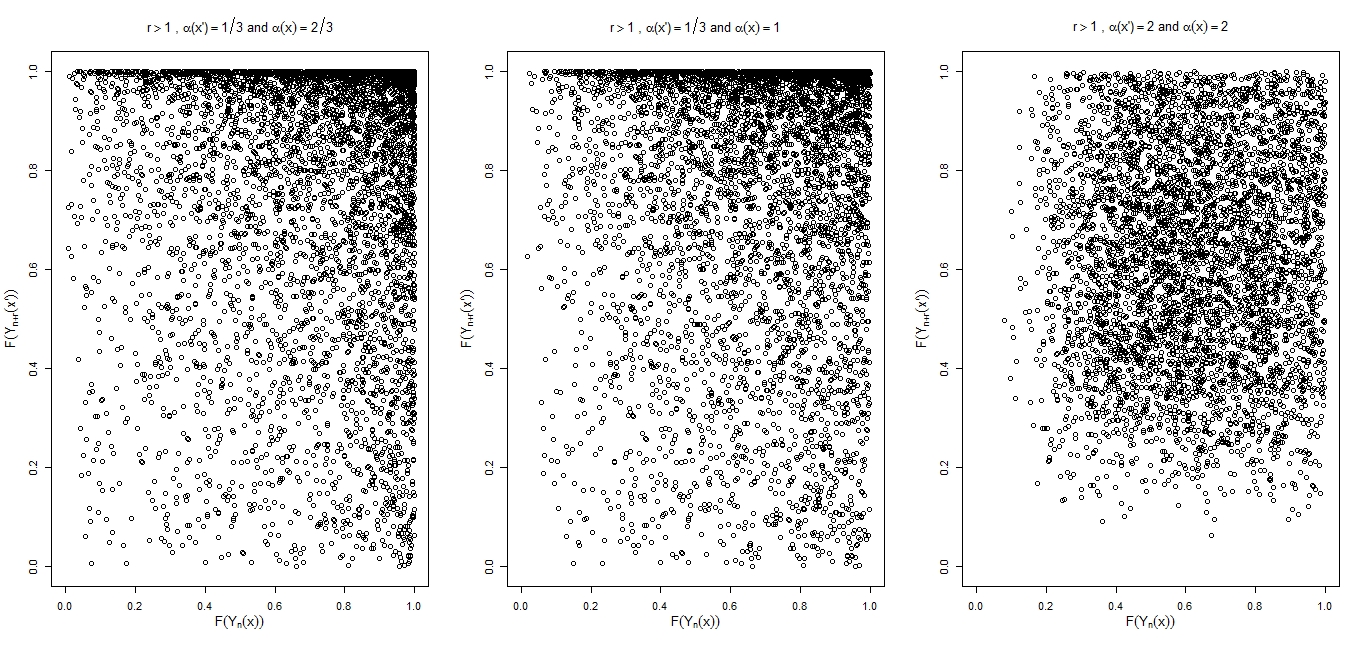}
 \caption{Observations of $(F(Y_n(x)), F(Y_{n+r}(x')))$ $n\geq 1,$ $r> 1,$  $x, x'\in \mathbb{R}^2,$ for different values of  $\alpha(x')$ and $\alpha(x)$}
  \end{center}\label{fig3}
 \end{figure}

For pre-asymptotic dependence, we have temporal
$$
\eta_{(x)}^{(Y,r)}=\left\{
\begin{array}{ll}
\max(1/2,\alpha(x)) & ,\, \alpha(x)<1\wedge r=1\\
1 & ,\,  \alpha(x)\geq 1\wedge r=1\\
1/2 & ,\, \textrm{otherwise.}\\
\end{array}
\right.
$$
space-time $\eta_{(x'|x)}^{(Y,r)}=1/2$ and spatial
$$
\eta_{(x'|x)}^{(Y)}=\left\{
\begin{array}{ll}
1 & ,\,\alpha(x')\leq\alpha(x)\leq 1\\
\frac{\alpha(x)}{1+\alpha(x)} & ,\,\alpha(x)<1<\alpha(x')\\
\frac{\alpha(x)}{\alpha(x')} & ,\,\alpha(x)<1,\alpha(x)<\alpha(x')<1+\alpha(x)\\
\max\left(\frac{1}{2},\frac{1}{\alpha(x')}\right) & ,\,1<\alpha(x)<\alpha(x')\\
\frac{1}{1+\alpha(x')} & ,\, \alpha(x')<1<\alpha(x)\\
\frac{1}{\alpha(x)} & ,\,\alpha(x')<1<\alpha(x)<1+\alpha(x')\\
\max\left(\frac{1}{2},\frac{1}{\alpha(x)}\right) & ,\, 1<\alpha(x')<\alpha(x)\,.
\end{array}
\right.
$$
\end{ex}

\begin{ex}
Let us now modify the previous example, so that $\{\widehat{X}_n(x),\,x\in\mathbb{R}^2\}_{n\geq 1}$  is an i.i.d. sequence of  random fields with unit Fréchet margins exhibiting  spatial dependence. More precisely, with the dependence between the margins $\widehat{X}_n(x)$ and $\widehat{X}_n(x')$ given by the  Schlather model. Hence, according to Schlather (\cite{sha2002} 2002), $\widehat{X}_n(x)\stackrel{d}{=} \widehat{X}(x)$ where  $$\widehat{X}(x)=\max_{i\geq 1}\xi_i\max\{0,U_i(x)\},\quad x\in \mathbb{R},$$ with $\{U_i(x),x \in \mathbb{R}\}_{i\geq 1}$ a sequence of independent stationary  standard Gaussian processes. For each $i\geq 1,$  $\{U_i(x),x \in \mathbb{R}\}_{i\geq 1}$ has  correlation function $\rho(h)$, scaled
so that $E[\max\{0, Y_i(x)\}] = 1$ and $\{\xi_i\}_{i\geq 1}$ are the points of a Poisson process on $\mathbb{R}^+$ with intensity measure $\xi^{-2}d\xi.$ It is a  max-stable process (de Haan \cite{haan1984} 1984).

Once again we shall consider $\{X_n(x)=\frac{2}{3}\hat{X}_n(x)\vee \frac{1}{3}\hat{X}_{n-1}(x), x\in \mathbb{R}^2\}_{n\geq 1}$
and  $\{Z_n(x)=Z_n,\,x\in\mathbb{R}^2\}_{n\geq 1}$ independent of the previous sequence. For each location $x\in \mathbb{R}^2,$  the temporal dependence of $\{Y_n(x)=X_n(x)\vee Z_n^{1/\alpha(x)}\}_{n\geq 1}$ is regulated by the 1-dependence of $\{X_n(x)\}_{n\geq 1},$ while the spatial dependence is induced by the common value of $Z_n$ in every location and by the correlation $\rho(h),$ where $h\in \mathbb{R}^+$ is the Euclidean distance between locations $x$ and $x'.$

For the temporal dependence  we find again $\lambda(Y_{n+r}(x)|Y_n(x)),$ $r\geq 1,$ given in (\ref{lamb}), as expected. As to what concerns space-time dependence, for $x\neq x',$ $r>1,$ $n\geq 1,$ $\lambda(Y_{n+r}(x')|Y_n(x))=0,$ from the 1-dependence of $\{{\bf{Y}}_n\}_{n\geq 1}.$
On the other hand, since $$P(\widehat{X}_n(x)\leq z, \widehat{X}_n(x')\leq z')=\exp\left[-\frac{1}{2}\left(\frac{1}{z}+\frac{1}{z'}\right)
\left(1+\sqrt{1-2(\rho(h)+1)\frac{zz'}{(z+z')^2}}\right)\right],$$  for $x, x'\in \mathbb{R}^2$ and $h=\|x-x'\|,$ we find, for $x\neq x',$ $n\geq 1,$
\begin{equation}\label{lamb2}
\lambda(Y_{n+1}(x')|Y_n(x))=\left\{
\begin{array}{ll}
0 & ,\,\alpha(x)< 1\\
\frac{1}{2}\left(\frac{1-\sqrt{1-\frac{4}{9}(\rho(h)+1)}}{2}\right) & ,\,\alpha(x)=1\\
\frac{1-\sqrt{1-\frac{4}{9}(\rho(h)+1)}}{2}& ,\,\alpha(x)>1
\end{array}
\right.
\end{equation}
and spatial dependence given by
\begin{eqnarray*}
\lambda(Y_{n}(x')|Y_n(x))&=&\lim_{y\to \infty}\left[\frac{1}{1+y^{1-\alpha(x)}}\left(\frac{1-\sqrt{1-\frac{1}{2}(\rho(h)+1)}}{2}\right)
+\frac{y^{-(\alpha(x')\vee \alpha(x))}}{y^{-1}+y^{-\alpha(x)}}\right]\\
&=&\left\{
\begin{array}{ll}
\frac{1-\sqrt{1-\frac{1}{2}(\rho(h)+1)}}{4}+\frac{1}{2} & ,\,\alpha(x)= 1\ \wedge \ \alpha(x')<1 \\
\frac{1-\sqrt{1-\frac{1}{2}(\rho(h)+1)}}{4} & ,\,\alpha(x)= 1\ \wedge \ \alpha(x')>1 \\
0 & ,\,\alpha(x)<1\ \wedge\ \alpha(x')>\alpha(x)\\
1& ,\,\alpha(x)<1\ \wedge\ \alpha(x')\leq\alpha(x)\\
\frac{1-\sqrt{1-\frac{1}{2}(\rho(h)+1)}}{2}& ,\,\alpha(x)>1
\end{array}.
\right.
\end{eqnarray*}

When $x=x'$ the result in  (\ref{lamb2}) leads again to (\ref{lamb}).


In the forthcoming simulations we  shall consider $\rho(h)$ belonging to the powered exponential parametric family, i.e. $\rho(h)=\exp[-(h/c_2)^{\nu}],$ where  $c_2\in ]0,+\infty[$ is the range parameter and $\nu \in ]0,2]$ is the smooth parameter, more precisely we shall consider $\rho(h)=\exp(-h).$

\end{ex}

\section{Estimation of model parameters}

The pMAX random field defined in  (\ref{Ferreira_pmax}) sets on parameters $\alpha(x)>0$ which vary with locations $x\in \mathbb{R}^2.$ The estimation of these parameters is essential for practical applications of the model. We shall therefore present a way to estimate $\alpha(x)$ at a given location $x\in \mathbb{R}^2.$\vspace{0.3cm}

As previously pointed out, for any location $x\in \mathbb{R}^2$ the distribution function of $Y_n(x),$ $n\geq 1,$ is given by $F(z)\equiv F_{Y_n(x)}(z)=e^{-z^{-1}-z^{-\alpha(x)}},$ for all $z>0.$ We can then write
\begin{equation}
\alpha(x)=\frac{\ln(-\ln(F(z))-\frac{1}{z})}{\ln\left(\frac{1}{z}\right)}, \ x\in\mathbb{R}^2,\ 0<z\neq 1.\label{alfa}
\end{equation}

We point out that parameter $\alpha(x)$ ``controls'' the tail of the distribution $F(z),$ $z>0,$ of  $Y_n(x),$ $n\geq 1.$ Thus, values of $\alpha(x)$ smaller than one lead to lighter tail distributions than values greater or equal to one.

Expression (\ref{alfa})  provides a simple way to estimate the parameter  $\alpha(x),$ $x\in \mathbb{R}^2,$ involved in model (\ref{Ferreira_pmax}). Therefore, if $(Y_1,\ldots,Y_n)$ is a random sample of $Y_n(x),$ $n\geq 1,$ for a given $x\in \mathbb{R}^2,$ $\alpha(x)>0$ can be estimated with
\begin{equation}\label{estim_alfa}
\widehat{\alpha}_n(x)=\frac{1}{n}\sum_{i=1}^n\frac{\ln(-\ln(\widehat{F}_{n}(Z_i))-1/Z_i)}{\ln(1/Z_i)}
\end{equation}
where  $\widehat{F}_n(y),$ $z\in \mathbb{R},$ denotes the empirical distribution function, associated to the $n-$sample $(Y_1,\ldots,Y_n),$ defined as $$\widehat{F}_n(y)=\frac{1}{n}\sum_{j=1}^n \indi_{\{Y_j\leq y\}},$$ with $\indi_A$  the indicator of an event $A.$ The $n$ values $(Z_1,\ldots,Z_n)$ are  such that $Z_i>1$ and $\widehat{F}_n(Z_i)< 1,$ $i=1,\ldots,n.$

In what follows, we explore through simulations the finite sample properties of the proposed estimator (\ref{estim_alfa}) for parameter $\alpha(x)$ at a given location $x\in \mathbb{R}^2.$

Each simulated data set consists of 1000 independent copies of $n$ realizations of a random sample $(Y_1,\ldots,Y_n)$ of (\ref{Ferreira_pmax}), with $X_n(x)$ and $Z_n(x),$ at a location $x\in \mathbb{R}^2,$ defined as in Examples 5.1 and 5.2, for different values of $\alpha(x)>0.$ Four different sample sizes are considered for each data set. The sample means $\widehat{\mu}(\widehat{\alpha}_{n,i}(x))$ and the sample standard deviations $\widehat{\sigma}(\widehat{\alpha}_{n,i}(x))$ of the estimates $\widehat{\alpha}_{n,i}(x),$ $i=1,\ldots,1000,$ depending on the sample size $n,$ were computed. The bias and the root mean squared errors ($RMSE(\widehat{\alpha}_{n,i}(x))$) were also determined.  The $n$ values $(Z_1,\ldots,Z_n)$ were considered to be an arithmetic sequence of $n$ numbers from 1.1 to the $kth$ percentile of $(Y_1,\ldots,Y_n)$, with $k=95$ and $k=75$.  Tables \ref{tab1} and \ref{tab2} summarize the estimation results obtained for the two percentile choices, respectively the $95th$ and the $75th$ percentile.

\begin{table}[t!]
\caption{Estimation results for $\alpha(x)>0$ in model (\ref{Ferreira_pmax}) of Example 5.1}
\begin{center}
\begin{tabular}{lL{2cm}ccC{2cm}c}
\hline \multicolumn{6}{c}{$95th$ percentile}\\
\hline
 $\alpha(x)$ & n& $\widehat{\mu}(\widehat{\alpha}_{n}(x))$ & $BIAS(\widehat{\alpha}_{n}(x))$ & $\widehat{\sigma}(\widehat{\alpha}_{n}(x))$&$RMSE(\widehat{\alpha}_{n}(x))$\\ \hline
 0.10& $n=100$&0.1010&0.0010& 0.0306&0.0306\\
 &$n=500$& 0.0995& -0.0005& 0.0071& 0.0071\\
 & $n=1000$& 0.1002& 0.0002& 0.0048& 0.0048\\
 & $n=5000$& 0.1000& $3.59\times 10^{-5}$&0.0020& 0.0020\\
 0.50& $n=100$& 0.5027& 0.0027& 0.0808&0.0808\\
 & $n=500$& 0.4981& -0.0019& 0.0317& 0.0317\\
 & $n=1000$& 0.4994& -0.0006& 0.0221& 0.0221\\
 & $n=5000$& 0.5000& $2.34\times 10^{-5}$& 0.0095& 0.0095\\
 1.00& $n=100$& 1.0212& 0.0212& 0.2108& 0.2118\\
 & $n=500$&  1.0079& 0.0079& 0.0884& 0.0887\\
 & $n=1000$& 1.0041 & 0.0041& 0.0604& 0.0605\\
 & $n=5000$&  1.0003& 0.0003& 0.0254&0.0254\\
 1.50& $n=100$&1.5308 & 0.0308& 0.4014& 0.4024 \\
 & $n=500$& 1.5387& 0.0387& 0.2243& 0.2275\\
 & $n=1000$&  1.5253& 0.0253& 0.1658& 0.1676\\
 & $n=5000$&  1.5045& 0.0045 &0.0711& 0.0712\\
 2.00& $n=100$& 1.9379& -0.0621& 0.6186& 0.6214\\
 & $n=500$&  1.9982 &  -0.0018& 0.3579& 0.3577\\
 & $n=1000$&   2.0063 & 0.0063& 0.2971& 0.2970\\
 & $n=5000$&  2.0327& 0.0327& 0.1916& 0.1943\\
  \hline \hline
  \multicolumn{6}{c}{$75th$ percentile}\\
\hline
$\alpha(x)$ & n& $\widehat{\mu}(\widehat{\alpha}_{n}(x))$ & $BIAS(\widehat{\alpha}_{n}(x))$ & $\widehat{\sigma}(\widehat{\alpha}_{n}(x))$&$RMSE(\widehat{\alpha}_{n}(x))$\\ \hline
 0.10& $n=100$& 0.1008&0.00081&  0.0367& 0.0367\\
 &$n=500$& 0.1000& $4.42\times 10^{-5}$&0.0084&0.0084\\
 & $n=1000$& 0.1000& $4.85\times 10^{-5}$&0.0056& 0.0056\\
 & $n=5000$& 0.1000& $2.39\times 10^{-5}$ & 0.0024& 0.0024\\
 0.50& $n=100$& 0.5060 &0.0060& 0.1319& 0.1320\\
 & $n=500$&  0.5040&0.0040& 0.0527& 0.0528\\
 & $n=1000$& 0.5012& 0.0012& 0.0372& 0.0372\\
 & $n=5000$& 0.5008& 0.0008& 0.0161& 0.0161\\
 1.00& $n=100$& 1.0387& 0.0387& 0.2927& 0.2951\\
 & $n=500$&  1.0051& 0.0051& 0.1173& 0.1174\\
 & $n=1000$& 1.0003& 0.0003& 0.0806& 0.0805\\
 & $n=5000$&  1.0001 & $9.61\times 10^{-5}$&0.0349& 0.0349\\
 1.50& $n=100$&1.5452& 0.0452& 0.4753& 0.4772 \\
 & $n=500$& 1.5124& 0.0124& 0.1866& 0.1869\\
 & $n=1000$&  1.5029& 0.0029& 0.1289&0.1289\\
 & $n=5000$&  1.4994& -0.0006& 0.0566& 0.0566\\
 2.00& $n=100$& 2.1143&0.1143& 0.6962& 0.7052\\
 & $n=500$&  2.0322& 0.0322& 0.2734& 0.2752\\
 & $n=1000$&   2.0222& 0.0222& 0.1914& 0.1926\\
 & $n=5000$&  2.0005& 0.0005& 0.0843& 0.0843\\
  \hline
 \end{tabular}\label{tab1}
\end{center}
\end{table}

\begin{table}[t!]
\caption{Estimation results for $\alpha(x)>0$ in model (\ref{Ferreira_pmax}) of Example 5.2}
\begin{center}
\begin{tabular}{lL{2cm}ccC{2cm}c}
\hline \multicolumn{6}{c}{$95th$ percentile}\\
\hline
 $\alpha(x)$ & n& $\widehat{\mu}(\widehat{\alpha}_{n}(x))$ & $BIAS(\widehat{\alpha}_{n}(x))$ & $\widehat{\sigma}(\widehat{\alpha}_{n}(x))$&$RMSE(\widehat{\alpha}_{n}(x))$\\ \hline
 0.10& $n=100$&0.0994&-0.0006&0.0311&0.0311\\
 &$n=500$&0.1001&0.0001&0.0069&0.0069\\
 & $n=1000$& 0.1001&0.0001&0.0047&0.0047\\
 & $n=5000$&  0.0999&-6.28$\times 10^{-5}$&0.0021&0.0021\\
 0.50& $n=100$&0.5027&0.0027&0.0822&0.0822\\
 & $n=500$&  0.4993&-0.0007&0.03199&0.03120\\
 & $n=1000$& 0.4999&-8.34&0.0208&0.0208\\
 & $n=5000$& 0.5000&2.39$\times 10^{-5}$&0.0096&0.0096\\
 1.00& $n=100$&1.0345&0.0345&0.2294&0.2319\\
 & $n=500$& 1.0057&0.0057&0.0851&0.0853\\
 & $n=1000$& 1.0013&0.0013&0.0576&0.0576\\
 & $n=5000$&  1.0005&0.0005&0.0264&0.0264\\
 1.50& $n=100$&1.5286&0.0286&0.4206&0.4214\\
 & $n=500$& 1.5336&0.0336&0.2315&0.2338\\
 & $n=1000$&  1.5316&0.0316&0.1739&0.1766\\
 & $n=5000$&  1.5048&0.0048&0.0715&0.0716\\
 2.00& $n=100$&1.9551&-0.0449&0.6755&0.6767\\
 & $n=500$&  2.0069&0.0069&0.3582&0.3580\\
 & $n=1000$&   2.0242&0.0242&0.2998&0.3006\\
 & $n=5000$& 2.0378&0.0378&0.1904&0.1941\\
  \hline \hline
  \multicolumn{6}{c}{$75th$ percentile}\\
\hline
$\alpha(x)$ & n& $\widehat{\mu}(\widehat{\alpha}_{n}(x))$ & $BIAS(\widehat{\alpha}_{n}(x))$ & $\widehat{\sigma}(\widehat{\alpha}_{n}(x))$&$RMSE(\widehat{\alpha}_{n}(x))$\\ \hline
 0.10& $n=100$& 0.1000&4.33$\times 10^{-5}$&0.0369&0.0368\\
 &$n=500$&  0.1002&0.0002&0.0086&0.0086\\
 & $n=1000$& 0.1003&0.0003&0.0056&0.0057\\
 & $n=5000$& 0.1001&8.43$\times 10^{-5}$&0.0022&0.0022\\
 0.50& $n=100$& 0.5078&0.0078&0.1366&0.1367\\
 & $n=500$& 0.5013&0.0013&0.0530&0.0530\\
 & $n=1000$& 0.5011&0.0011&0.0365&0.0365\\
 & $n=5000$& 0.5006&0.0006&0.0160&0.0160\\
 1.00& $n=100$& 1.0403&0.0403&0.2974&0.3000\\
 & $n=500$& 1.0011&0.0011&0.1159&0.1159\\
 & $n=1000$&1.0032&0.0032&0.0820&0.0820\\
 & $n=5000$&1.0014&0.0014&0.0371&0.0371\\
 1.50& $n=100$& 1.5511&0.0511&0.4414&0.4442\\
 & $n=500$& 1.5099&0.0099&0.1764&0.1766\\
 & $n=1000$& 1.5037&0.0037&0.1270&0.1269\\
 & $n=5000$&  1.5029&0.0029&0.0571&0.0572\\
 2.00& $n=100$& 2.0734&0.0734&0.6760&0.6797\\
 & $n=500$&  2.0269&0.0269&0.2774&0.2785\\
 & $n=1000$& 2.0119&0.0119&0.1861&0.1864\\
 & $n=5000$&  2.0068&0.0068&0.0843&0.0845\\
  \hline
 \end{tabular}\label{tab2}
\end{center}
\end{table}

The results of Tables \ref{tab1} and \ref{tab2} show that the proposed estimator for $\alpha(x)$ has an overall good performance. Due to the tail weight of $F(z)$ the estimator has a better behavior for $\alpha(x)<1$ (light tail) when the empirical distribution function is evaluated to higher percentiles of the data, whereas for $\alpha(x)\geq 1$ (heavy tail) better results are obtained when the empirical distribution function is evaluated to lower percentiles of the data.

\section{Discussion}

The pMAX model presented in this work is another contribution to the modeling of heavy tail random fields. The $\alpha(x)$ parameter allows for an encompassing extreme dependency structure, including asymptotic and pre-asymptotic dependence. If $\alpha(x)$ is less than or equal to 1, its value affects the measures of dependence, $\lambda(Y_n(x')|Y_n(x))$ and $\eta_{(x'|x)}^{(Y)},$  that were considered here. In this case, $\alpha(x)$ also corresponds to the tail index of the marginal $Y_n(x)$, so it can be estimated as such.


\end{document}